\documentclass[11pt,a4paper]{amsart}
\usepackage[utf8]{inputenc}
\usepackage[T1]{fontenc}
\usepackage{amsmath,amssymb,amsthm}
\usepackage{geometry}
\usepackage{hyperref}

\theoremstyle{plain}
\newtheorem{theorem}{Theorem}[section]
\newtheorem{proposition}[theorem]{Proposition}

\theoremstyle{definition}
\newtheorem{definition}[theorem]{Definition}
\theoremstyle{remark}
\newtheorem{remark}[theorem]{Remark}

\title{Homogeneous and Flow-Invariant Geometry of the Unit Tangent Bundle of Hyperbolic Space}

\author{Daniel Koama}
\address{D\'epartement de Math\'ematiques, UFR/ST, Universit\'e Nobert Zongo,  Burkina Faso}
\email{koamadaniel52@gmail.com}
\address{D\'epartement de Math\'ematiques, UFR/ST, Universit\'e Nobert Zongo,  Burkina Faso}

\author{L\'eonard Todjihound\'e}
\address{ Institut de Math\'ematiques et de Sciences Physiques, Porto-Novo, B\'enin.}
\email{leonardt@imsp-uac.org}

\subjclass[2020]{57K32, 57K35, 53C30}
\keywords{Hyperbolic space, unit tangent bundle, Sasaki metric, homogeneous space, geodesic flow, Busemann function, horospherical foliation}

\begin{document}

\begin{abstract}
We construct the Sasaki metric on the unit tangent bundle $U_gM$ of a Riemannian manifold and describe the unit tangent bundle $U\mathbb{H}^n$ of real hyperbolic space as a homogeneous space, both under $SO_0(1,n)$ and under the larger group $SO_0(1,n)\times SO_0(1,1)$, yielding explicit families of $G$-invariant metrics. Using Hopf coordinates and Busemann functions, we then construct a Riemannian metric $g_{\mathrm{Hopf}}$ on $U\mathbb{H}^n$ that is invariant under the geodesic flow, and we identify the horospherical cylinders as totally geodesic leaves of a natural foliation associated to a Busemann function, with respect to an explicit metric connection with torsion.
\end{abstract}

\maketitle

\section{Introduction}

The unit tangent bundle of a Riemannian manifold carries a rich geometric structure once endowed with the Sasaki metric, and when the base manifold is itself homogeneous, its unit tangent bundle inherits additional symmetry that can be described explicitly in terms of Lie group actions. In this paper we focus on the case of real hyperbolic space $\mathbb{H}^n$.

After recalling the construction of the Sasaki metric (Section~\ref{sec:sasaki}) and general facts about homogeneous Riemannian spaces (Section~\ref{sec:homog}), we describe in Section~\ref{sec:UHn} the unit tangent bundle $U\mathbb{H}^n$ as a homogeneous space in two different ways: as $SO_0(1,n)/SO(n-1)$, and as a homogeneous space under the larger product group $SO_0(1,n)\times SO_0(1,1)$, in which right multiplication by the second factor realises the geodesic flow. We decompose the isotropy representations in both cases and describe the corresponding invariant metrics.

The main new results of the paper are Theorem~\ref{thm:hopf} and Theorem~\ref{thm:horosphere}. In Theorem~\ref{thm:hopf} we use Hopf coordinates, built from the pair of endpoints at infinity of a geodesic together with a Busemann function, to construct an explicit Riemannian metric $g_{\mathrm{Hopf}}$ on $U\mathbb{H}^n$ for which the geodesic flow acts by isometries — a property the Sasaki metric itself does not have. In Theorem~\ref{thm:horosphere} we show that, with respect to a suitable metric connection $\nabla^b$ with torsion associated to a Busemann function $b$, the distribution orthogonal to the horizontal lift of $\nabla b$ is integrable with totally geodesic leaves, and we identify these leaves explicitly as the horospherical cylinders $\pi^{-1}(b^{-1}(c)) \cong S_c \times S^{n-1}$.

\begin{remark}
The study of the holonomy algebra of the canonical connections associated with the homogeneous descriptions of Section~\ref{sec:UHn}, and of the corresponding classification in the sense of Tricerri and Vanhecke~\cite{TV}, follows a program carried out for $\mathbb{H}^n$ itself by Castill\'on L\'opez, Gadea and Swann~\cite{CGS}. Extending their analysis to the unit tangent bundle $U\mathbb{H}^n$ is a natural continuation of the present work and will be treated separately.
\end{remark}

\section{The Sasaki metric on a unit tangent bundle}\label{sec:sasaki}

Let $(M,g)$ be a Riemannian manifold. The unit tangent bundle of $(M,g)$, denoted $U_gM$, is the set of unit tangent vectors to $M$:
\[
U_gM = \{ v \in TM \mid g(v,v) = 1 \}.
\]

Let $p_M : TM \to M$ be the canonical projection, and let $p$ denote its restriction to $U_gM$. The kernel of the differential $T_vp$ is the tangent space $T_v(T_xM)$ to the fibre through $x = p(v)$; this vertical space is canonically identified with $T_xM$. A Riemannian submersion metric on $TM$ inducing $g_x$ on each fibre $T_xM$ is obtained by choosing a distribution of planes in $T\,TM$ transverse to the vertical distribution; a connection on $TM$ defines such a distribution, and taking the Levi-Civita connection of $g$ yields the \emph{Sasaki metric}.

Concretely, for a curve $s(t) = (u(t), v(t))$ in $TM$ — that is, a vector field $v(t)$ along a curve $u(t)$ in $M$ — the squared norm of $s'(t)$ for the Sasaki metric is
\[
g_1\big(s'(t), s'(t)\big) = g\big(u'(t), u'(t)\big) + g\big(D_u v'(t), D_u v'(t)\big),
\]
where $D_u$ denotes covariant differentiation along $u(t)$. The projection $p_M$ has totally geodesic fibres, and restricting this construction to $U_gM$, the projection $p : (U_gM, g_1) \to (M,g)$ is again a Riemannian submersion with totally geodesic fibres.

\section{Homogeneous Riemannian spaces}\label{sec:homog}

We recall the basic notions used below.

\begin{definition}
A Riemannian manifold $(M,g)$ is \emph{homogeneous} if its isometry group $I(M,g)$ acts transitively on $M$. It is \emph{$G$-homogeneous} for a closed subgroup $G \le I(M,g)$ if $G$ acts transitively; in that case $M \cong G/H_x$ for the (compact) stabiliser $H_x$ of a point $x$.
\end{definition}

\begin{theorem}[{\cite[Ch.~X]{Kirillov}}]
Let $G/H$ be an effective homogeneous space with $H$ compact. Then $G$-invariant Riemannian metrics on $G/H$ exist, and are in bijection with $\mathrm{Ad}(H)$-invariant scalar products on a chosen $\mathrm{Ad}(H)$-invariant complement $\mathfrak{m}$ of $\mathfrak{h}$ in $\mathfrak{g}$.
\end{theorem}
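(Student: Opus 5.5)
The plan is to prove the bijection first and then deduce existence from it. Throughout, $o = eH$ is the base point and $\pi : G \to G/H$ is the projection. The differential $d\pi_e : \mathfrak{g} \to T_o(G/H)$ is surjective with kernel $\mathfrak{h}$.

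\textbf{Step 1: the complement $\mathfrak{m}$.} Because $H$ is compact, we can average any inner product on $\mathfrak{g}$ over $H$ with respect to Haar measure. This gives an $\mathrm{Ad}(H)$-invariant inner product on $\mathfrak{g}$. Its orthogonal complement $\mathfrak{m} = \mathfrak{h}^\perp$ is then an $\mathrm{Ad}(H)$-invariant complement to $\mathfrak{h}$. If instead a complement $\mathfrak{m}$ has been chosen in advance, we simply use that one. In either case $d\pi_e|_{\mathfrak{m}} : \mathfrak{m} \to T_o(G/H)$ is an isomorphism.

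\textbf{Step 2: it intertwines the actions.} For $h \in H$, write $\tau_h$ for left translation by $h$ on $G/H$. We have $\tau_h \circ \pi = \pi \circ c_h$, where $c_h$ is conjugation by $h$, because $h g H = h g h^{-1} H$. Differentiating at $e$ gives
\[
d\tau_h \circ d\pi_e = d\pi_e \circ \mathrm{Ad}(h).
\]
So $d\pi_e|_{\mathfrak{m}}$ intertwines $\mathrm{Ad}(H)|_{\mathfrak{m}}$ with the isotropy representation of $H$ on $T_o(G/H)$.

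\textbf{Step 3: the bijection.} Let $g$ be a $G$-invariant metric. Then $g_o$ is invariant under the isotropy representation, so by Step 2 its pullback to $\mathfrak{m}$ is $\mathrm{Ad}(H)$-invariant.

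Conversely, let $\langle\cdot,\cdot\rangle$ be an $\mathrm{Ad}(H)$-invariant scalar product on $\mathfrak{m}$. Transport it to an inner product $g_o$ on $T_o(G/H)$, and define
\[
g_{xH}(X,Y) = g_o\big(d\tau_x^{-1}X,\, d\tau_x^{-1}Y\big).
\]
This is well defined: if $x$ is replaced by $xh$, then $\tau_{xh}^{-1} = \tau_h^{-1}\tau_x^{-1}$, and $d\tau_h$ preserves $g_o$ by Step 2. The resulting metric is $G$-invariant by construction.

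Smoothness holds because locally $G \to G/H$ admits smooth sections $\sigma$, and $g = \tau_{\sigma}^{*} g_o$ depends smoothly on the point. The two constructions are mutually inverse. A $G$-invariant metric is determined by its value at $o$, since $G$ acts transitively.

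\textbf{Step 4: existence.} Existence follows because an $\mathrm{Ad}(H)$-invariant scalar product on $\mathfrak{m}$ exists. One is the restriction of the averaged inner product from Step 1; alternatively, average any scalar product on $\mathfrak{m}$ over the compact group $H$. Effectiveness is not needed for the bijection. It only ensures that $G$ acts faithfully, so that $G$ embeds in the isometry group.

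The only delicate points are the well-definedness check and smoothness in Step 3. Both reduce to the intertwining identity of Step 2 and the existence of local sections of $\pi$.
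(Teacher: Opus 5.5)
Your proof is correct. You identify $T_o(G/H)$ with $\mathfrak{m}$ via $d\pi_e$, check that this intertwines $\mathrm{Ad}(H)|_{\mathfrak{m}}$ with the isotropy representation, and transport an isotropy-invariant inner product at $o$ around by $G$; your smoothness step should be read as $g_p=(\tau_{\sigma(p)^{-1}})^{*}g_o$ at $p$, which is smooth because the action map is. Existence then follows by Haar averaging over the compact group $H$. The paper gives no proof and only cites Kirillov, Ch.~X; your argument is the standard proof of that fact, and you are right that effectiveness plays no role in the bijection.
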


When $G$ is semisimple, one may take $\mathfrak{m}$ to be the orthogonal complement of $\mathfrak{h}$ with respect to the Killing form $B$; the restriction of $B$ to $\mathfrak{m}$ is then called the \emph{canonical normal metric} (and likewise any metric proportional to it).

\section{The unit tangent bundle $U\mathbb{H}^n$ as a homogeneous space}\label{sec:UHn}

\subsection{Real hyperbolic space}

Let $q$ be the Lorentzian form on $\mathbb{R}^{n+1}$, $q(x,y) = -x_0y_0 + \sum_{i=1}^n x_iy_i$. Real hyperbolic space is
\[
\mathbb{H}^n = \{ x = (x_0,\dots,x_n) \in \mathbb{R}^{n+1} \mid q(x,x) = -1,\ x_0 > 0\},
\]
a submanifold of $\mathbb{R}^{n+1}$. For $a \in \mathbb{H}^n$, $T_a\mathbb{H}^n$ is identified with $a^{\perp_q}$, on which $q$ restricts to a positive definite form $g_a$; the resulting tensor $g$ is the canonical metric of $\mathbb{H}^n$.

Let $O(1,n)$, $O_0(1,n)$, $SO_0(1,n)$ be the corresponding Lorentz groups, with $I_{1,n} = \mathrm{diag}(-1,1,\dots,1)$. The group $SO_0(1,n)$ acts transitively on $\mathbb{H}^n$ with isotropy $SO(n)$, giving $\mathbb{H}^n = SO_0(1,n)/SO(n)$.

\begin{proposition}
The homogeneous space $\mathbb{H}^n = SO_0(1,n)/SO(n)$ has irreducible isotropy representation. Consequently the $SO_0(1,n)$-invariant metric on $\mathbb{H}^n$ is unique up to a positive scalar.
\end{proposition}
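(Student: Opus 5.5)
Write $\mathfrak{g}=\mathfrak{so}(1,n)=\{X : X^T I_{1,n} + I_{1,n}X = 0\}$. Concretely,
\[
X=\begin{pmatrix} 0 & \xi^T\\ \xi & A\end{pmatrix},\qquad \xi\in\mathbb{R}^n,\ A\in\mathfrak{so}(n).
\]
Take the base point $o=e_0=(1,0,\dots,0)$, whose stabiliser is $H=SO(n)$ embedded as $\mathrm{diag}(1,k)$. Then $\mathfrak{h}=\{\xi=0\}$ and the natural complement is $\mathfrak{m}=\{A=0\}\cong\mathbb{R}^n$. This $\mathfrak{m}$ is $\mathrm{Ad}(H)$-invariant; it is the Cartan complement, i.e. the Killing-orthogonal of $\mathfrak{h}$.

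The first step is to compute the isotropy action. A direct block multiplication gives
\[
\mathrm{diag}(1,k)\begin{pmatrix}0&\xi^T\\ \xi&0\end{pmatrix}\mathrm{diag}(1,k^{-1})=\begin{pmatrix}0&(k\xi)^T\\ k\xi&0\end{pmatrix},
\]
so under $\mathfrak{m}\cong\mathbb{R}^n$ the isotropy representation is the standard representation of $SO(n)$. Equivalently, it is the differential at $o$ of the action on $T_o\mathbb{H}^n=e_0^{\perp_q}=\{0\}\times\mathbb{R}^n$, which is the same identification.

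The second step is to show that the standard representation of $SO(n)$ on $\mathbb{R}^n$ is irreducible for $n\ge 2$. Let $W\ne 0$ be an invariant subspace and pick a unit vector $w\in W$. Since $SO(n)$ acts transitively on the unit sphere $S^{n-1}$ when $n\ge2$, the orbit of $w$ is all of $S^{n-1}$, so $W\supset S^{n-1}$ and hence $W=\mathbb{R}^n$. For $n=1$ the space is one-dimensional, so irreducibility is trivial. This step is immediate.

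The third step gives uniqueness. By the theorem quoted in Section~\ref{sec:homog}, $SO_0(1,n)$-invariant metrics correspond to $\mathrm{Ad}(H)$-invariant inner products on $\mathfrak{m}$; the theorem applies because $H$ is compact and the action is effective. Fix one such inner product $\langle\cdot,\cdot\rangle_0$, for instance the restriction of $q$, or equivalently the Killing form, which is positive on $\mathfrak{m}$. Any other invariant inner product can be written $\langle x,y\rangle=\langle Sx,y\rangle_0$ with $S$ symmetric, positive and commuting with $H$. Each eigenspace of $S$ is then $H$-invariant, so by irreducibility $S$ has a single eigenvalue and $S=\lambda\,\mathrm{Id}$ with $\lambda>0$.

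Note that Schur's lemma over $\mathbb{R}$ alone would allow a commutant that is a division algebra. The symmetry of $S$ is what removes this issue, since a symmetric operator is diagonalisable with real eigenvalues. The only point needing care is the case $n=2$: there $SO(2)$ on $\mathbb{R}^2$ has commutant $\mathbb{C}$, so irreducibility alone does not force scalar commutants. The eigenspace argument, which uses only the symmetry of $S$, still gives the conclusion.

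The main obstacle, mild as it is, is the bookkeeping in the first step: identifying $\mathfrak{m}$ correctly and checking its invariance, so that the bijection theorem can be applied. The remaining steps are routine.
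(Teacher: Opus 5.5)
The paper states this proposition without proof, so there is no argument of the authors' to compare against. Your proof is correct and is the standard one:
\begin{itemize}
\item the $\mathrm{Ad}$ computation identifies the isotropy representation with the standard representation of $SO(n)$ on $\mathbb{R}^n$;
\item transitivity of $SO(n)$ on $S^{n-1}$ gives irreducibility;
\item the bijection theorem of Section~\ref{sec:homog}, combined with the eigenspace argument for the symmetric $H$-equivariant operator $S$, gives uniqueness up to scale.
\end{itemize}

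Your observation about $n=2$ is worth keeping. There the real commutant of $SO(2)$ on $\mathbb{R}^2$ is $\mathbb{C}$, so a bare appeal to Schur's lemma, which is what the paper's ``Consequently'' suggests, does not suffice. It is the symmetry of $S$ that forces $S=\lambda\,\mathrm{Id}$.

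One small slip: for $n=1$ the Killing form of $\mathfrak{so}(1,1)$ vanishes, so it is not positive on $\mathfrak{m}$. In that case you must take the restriction of $q$ as the reference inner product, which you also offer. That case is trivial anyway, since $\mathfrak{m}$ is one-dimensional.
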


\subsection{$U\mathbb{H}^n$ under $SO_0(1,n)$}

The unit tangent bundle of $\mathbb{H}^n$ is
\[
U\mathbb{H}^n = \{(x,X) \in \mathbb{R}^{n+1}\times\mathbb{R}^{n+1} \mid q(x,X)=0,\ q(x,x)=-1,\ q(X,X)=1,\ x_0>0\},
\]
on which $SO_0(1,n)$ acts transitively by $A\cdot(x,X) = (Ax,AX)$. Taking as base point the pair $(e_0,e_1)$ of the first two vectors of the canonical basis, the stabiliser is identified with the image of $SO(n-1)$ under $A \mapsto \begin{pmatrix} I_2 & 0 \\ 0 & A\end{pmatrix}$, so that
\[
U\mathbb{H}^n \cong SO_0(1,n)/SO(n-1).
\]

Writing $\mathfrak{m}$ for the orthogonal complement of $\mathfrak{so}(n-1)$ in $\mathfrak{so}_0(1,n)$ for the Killing form $B$, an element of $\mathfrak{m}$ has the form $X = (\alpha,u,v)$ with $u,v \in \mathbb{R}^{n-1}$, and the isotropy action is $A\cdot(\alpha,u,v) = (\alpha, Au, Av)$.

\begin{proposition}
The isotropy representation on $\mathfrak{m}$ decomposes as $\mathfrak{m} = \mathfrak{m}_1 \oplus \mathfrak{m}_2 \oplus \mathfrak{m}_3$, where $\mathfrak{m}_1$ is the trivial $1$-dimensional $SO(n-1)$-module and $\mathfrak{m}_2,\mathfrak{m}_3$ are two copies of the standard representation on $\mathbb{R}^{n-1}$.
\end{proposition}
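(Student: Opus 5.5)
We will make the description of $\mathfrak{m}$ explicit in matrices and then read off the decomposition. Write $\mathfrak{so}_0(1,n)=\{X : X^{T}I_{1,n}+I_{1,n}X=0\}$. Its elements have the block form
\[
X=\begin{pmatrix} 0 & \alpha & u^{T}\\ \alpha & 0 & -v^{T}\\ u & v & C\end{pmatrix},
\qquad \alpha\in\mathbb{R},\ u,v\in\mathbb{R}^{n-1},\ C\in\mathfrak{so}(n-1),
\]
with respect to the splitting $\mathbb{R}^{n+1}=\mathbb{R}e_0\oplus\mathbb{R}e_1\oplus\mathbb{R}^{n-1}$. The isotropy algebra $\mathfrak{h}=\mathfrak{so}(n-1)$ is exactly the subspace $\alpha=0$, $u=v=0$.

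The first step is to show that $\mathfrak{m}=\{C=0\}$. The Killing form of $\mathfrak{so}_0(1,n)$ is a nonzero multiple of $(X,Y)\mapsto\operatorname{tr}(XY)$. A direct block computation shows that $\operatorname{tr}(XY)$ for $X\in\{C=0\}$ and $Y\in\mathfrak{h}$ vanishes, since the only products landing on diagonal blocks involve $C_Y$ against the zero block of $X$. Dimension count then gives $\dim\{C=0\}=1+2(n-1)=\dim\mathfrak{g}-\dim\mathfrak{h}$. So this subspace is the $B$-orthogonal complement, which justifies the parametrisation $X=(\alpha,u,v)$.

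The second step is to compute the isotropy action. For $k=\operatorname{diag}(I_2,A)$ with $A\in SO(n-1)$, conjugate: $kXk^{-1}$ has upper-left $2\times2$ block unchanged, lower-left columns $Au$, $Av$, upper-right rows $u^{T}A^{-1}=(Au)^{T}$ (since $A$ is orthogonal), similarly for $v$, and lower-right block $ACA^{-1}=0$. This gives $\mathrm{Ad}(k)(\alpha,u,v)=(\alpha,Au,Av)$.

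Finally, set $\mathfrak{m}_1=\{(\alpha,0,0)\}$, $\mathfrak{m}_2=\{(0,u,0)\}$ and $\mathfrak{m}_3=\{(0,0,v)\}$. Each is $\mathrm{Ad}(SO(n-1))$-stable by the formula above, and their direct sum is $\mathfrak{m}$. On $\mathfrak{m}_1$ the action is trivial. On $\mathfrak{m}_2$ and $\mathfrak{m}_3$ the maps $u\mapsto(0,u,0)$ and $v\mapsto(0,0,v)$ are equivariant isomorphisms from the standard representation of $SO(n-1)$ on $\mathbb{R}^{n-1}$.

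The only real care needed is in the sign conventions for the $(e_1,\mathbb{R}^{n-1})$ block and in checking orthogonality to $\mathfrak{h}$. Both are routine block-matrix manipulations, so I expect no genuine obstacle. If irreducibility of $\mathfrak{m}_2$ and $\mathfrak{m}_3$ is also intended, it holds for $n-1\ge2$ because $SO(n-1)$ acts transitively on spheres. For $n=2$ the summands are trivial one-dimensional modules, and the statement should be read accordingly.
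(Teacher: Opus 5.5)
Your proposal is correct and follows the same route as the paper. The paper likewise parametrises $\mathfrak{m}$ by triples $(\alpha,u,v)$, observes that $\mathrm{diag}(I_2,A)$ acts by $(\alpha,u,v)\mapsto(\alpha,Au,Av)$, and reads off the three invariant summands, but it only asserts the form of $\mathfrak{m}$ and of the action; your block-matrix checks supply exactly those omitted details (orthogonality to $\mathfrak{h}$ under the trace form, the dimension count using nondegeneracy of $B$ for $n\ge 2$, and the explicit conjugation).
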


\begin{proposition}
Every $\mathrm{Ad}_{SO_0(1,n)}SO(n-1)$-invariant metric on $\mathfrak{m}$ has, after diagonalisation, the form
\[
g = \lambda_1 \alpha^2 + \lambda_2 \|u\|^2 + \lambda_3\|v\|^2,\qquad \lambda_1,\lambda_2,\lambda_3 > 0.
\]
\end{proposition}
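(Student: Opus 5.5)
The plan is to combine the decomposition of the preceding proposition with Schur's lemma, applied to the symmetric endomorphism of $\mathfrak{m}$ that represents an invariant scalar product. Fix an $\mathrm{Ad}(SO(n-1))$-invariant scalar product $g$ on $\mathfrak{m}$. Let $B_0 = \alpha^2+\|u\|^2+\|v\|^2$ be the reference scalar product; up to positive rescaling on each summand it is the restriction of the Killing form, so it is invariant. Write $g(X,Y)=B_0(SX,Y)$ with $S$ symmetric, positive definite and commuting with the isotropy action $A\cdot(\alpha,u,v)=(\alpha,Au,Av)$. The task is to determine the commutant of this representation and then diagonalise $S$ inside it.

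The first step is to show that $\mathfrak{m}_1$ is $g$-orthogonal to $\mathfrak{m}_2\oplus\mathfrak{m}_3$. For $n\ge 3$, $\mathfrak{m}_2\oplus\mathfrak{m}_3$ contains no nonzero fixed vector of $SO(n-1)$, so the $SO(n-1)$-equivariant maps between $\mathfrak{m}_1$ and $\mathfrak{m}_2\oplus\mathfrak{m}_3$ vanish. Hence $S$ preserves $\mathfrak{m}_1$ and acts there as a scalar $\lambda_1>0$. This gives the term $\lambda_1\alpha^2$.

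The second step treats $S$ on $\mathfrak{m}_2\oplus\mathfrak{m}_3\cong \mathbb{R}^{n-1}\otimes\mathbb{R}^2$. When $n-1\ge 3$, the standard representation of $SO(n-1)$ is absolutely irreducible, so its commutant is $\mathbb{R}$. An equivariant symmetric endomorphism is then $\mathrm{Id}\otimes C$ with $C$ a positive definite symmetric $2\times 2$ matrix, giving
\[
g = \lambda_1\alpha^2 + a\|u\|^2+2b\langle u,v\rangle + c\|v\|^2 .
\]
Diagonalising $C$ by an orthogonal change of basis of $\mathbb{R}^2$ mixes the two copies $\mathfrak{m}_2,\mathfrak{m}_3$ equivariantly; this is exactly the ``after diagonalisation'' clause. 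In the new coordinates $g=\lambda_1\alpha^2+\lambda_2\|u\|^2+\lambda_3\|v\|^2$ with $\lambda_2,\lambda_3$ the eigenvalues of $C$.

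The main obstacle lies in the low-dimensional cases, where the argument above is not automatic, and I would check them separately. For $n=3$, $SO(2)$ acting on $\mathbb{R}^2$ has commutant $\mathbb{C}$. Invariant forms may then also contain the term $\omega(u,v)=u_1v_2-u_2v_1$, so $C$ becomes a positive definite Hermitian $2\times2$ matrix. I would diagonalise it by a unitary change of basis, namely a $U(2)$-rotation of the pair $(u,v)$ combined with the complex structure $J$, which commutes with $SO(2)$ and so still gives an equivariant identification. The resulting form is again of the stated shape. For $n=2$, the isotropy group is trivial and $\mathfrak{m}_2,\mathfrak{m}_3$ are one-dimensional. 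Here every scalar product is invariant, and the statement reduces to simultaneous diagonalisation of an arbitrary $3\times 3$ positive matrix, which again only requires a basis change of $\mathfrak{m}$. I would state explicitly that ``after diagonalisation'' means modulo such changes of basis by equivariant automorphisms of $\mathfrak{m}$.
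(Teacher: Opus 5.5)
Your argument is correct. The paper states this proposition without proof, treating it as an immediate consequence of the decomposition $\mathfrak{m}=\mathfrak{m}_1\oplus\mathfrak{m}_2\oplus\mathfrak{m}_3$. Your Schur-lemma computation of the commutant is the argument that justification needs, so there is no competing route to compare against.

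Your version is also more careful than the paper's formulation. Because $\mathfrak{m}_2\cong\mathfrak{m}_3$, an invariant scalar product generally contains a cross term $2b\langle u,v\rangle$. For example, $\alpha^2+\|u\|^2+\|v\|^2+\langle u,v\rangle$ is invariant and positive definite. For $n=3$ there can also be a multiple of $u_1v_2-u_2v_1$. So the proposition fails in the fixed coordinates $(\alpha,u,v)$. It holds only after an equivariant change of basis that mixes the two copies, which is exactly how you read ``after diagonalisation''. Your separate treatment of $n=3$ (commutant $\mathbb{C}$, Hermitian diagonalisation) and $n=2$ (trivial isotropy, so $\mathfrak{m}_1$ need not even be orthogonal to the rest) covers cases the paper passes over silently.

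One small slip: the restriction of the Killing form to $\mathfrak{m}$ is not positive definite. It is positive on the boost directions, namely $\alpha$ and the copy of $\mathbb{R}^{n-1}$ coming from the planes $(e_0,e_j)$. It is negative on the copy coming from rotations in the planes $(e_1,e_j)$, $j\ge 2$. So $B_0$ is not a positive rescaling of the Killing form on every summand. This is harmless: $B_0$ is invariant directly, because $SO(n-1)$ acts orthogonally on $u$ and $v$ and trivially on $\alpha$.
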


\subsection{$U\mathbb{H}^n$ under $G_1 = SO_0(1,n)\times SO_0(1,1)$}

Identifying an element of $U\mathbb{H}^n$ with an $(n+1)\times 2$ matrix, the group $G_1 = SO_0(1,n)\times SO_0(1,1)$ acts transitively by $(A,B)\cdot P = APB$. Since $SO_0(1,1)$ is abelian, right multiplication by $B$ gives a left action interpreted geometrically as the \emph{geodesic flow}:
\[
(u,v) \longmapsto (\cosh u + \sinh v,\ \sinh u + \cosh v).
\]

The isotropy of the base point $(e_0,e_1)$ is $SO(n-1)\times SO_0(1,1)$, so $U\mathbb{H}^n \cong G_1/H_1$. Writing $\mathfrak{n} \cong T_{[H_1]}G_1/H_1$ for the orthogonal complement of $\mathfrak{so}(n-1)\times\mathfrak{so}_0(1,1)$ with respect to the Killing form $B_2 = B_1 + B_0$ of $\mathfrak{so}_0(1,n)\times\mathfrak{so}_0(1,1)$, and identifying $\mathfrak{n}$ with triples $(x,y,z)$, $x\in\mathbb{R}$, $y,z \in \mathbb{R}^{n-1}$, the isotropy representation is
\[
(x,y,z) \longmapsto (x,\ \cosh(A)y - \sinh(A)z,\ -\sinh(A)y + \cosh(A)z), \qquad A \in \mathfrak{so}(n-1).
\]

\begin{proposition}
$\mathfrak{n}$ decomposes into irreducible factors $\mathfrak{n} = \mathfrak{n}_1 \oplus \mathfrak{n}_2$, where $\mathfrak{n}_1$ is the trivial $1$-dimensional module and $\mathfrak{n}_2 \cong \mathbb{R}^{n-1}\times\mathbb{R}^{n-1}$.
\end{proposition}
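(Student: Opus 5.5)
We prove the statement by working directly with the isotropy formula displayed just before it. The action of $(A,t)\in SO(n-1)\times SO_0(1,1)$ on $\mathfrak{n}$ preserves the coordinate $x$, so the line $\mathfrak{n}_1=\{(x,0,0)\}$ is a trivial submodule. The subspace $\mathfrak{n}_2=\{(0,y,z)\}$ is also preserved. Moreover, the Killing form $B_2$ restricted to $\mathfrak{n}$ is $\mathrm{Ad}(H_1)$-invariant and nondegenerate, and it is block diagonal for the splitting $x\mid(y,z)$. So $\mathfrak{n}=\mathfrak{n}_1\oplus\mathfrak{n}_2$ is a decomposition into submodules. Two things remain: $\mathfrak{n}_1$ is irreducible for dimension reasons, and we must show that $\mathfrak{n}_2$ is irreducible.

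To prove irreducibility of $\mathfrak{n}_2\cong\mathbb{R}^{n-1}\times\mathbb{R}^{n-1}$, we separate the two factors of $H_1$. The $SO(n-1)$ factor acts diagonally, $(y,z)\mapsto(Ay,Az)$. The $SO_0(1,1)$ factor acts by the hyperbolic rotation $(y,z)\mapsto(\cosh t\,y-\sinh t\,z,\,-\sinh t\,y+\cosh t\,z)$; this is how we read the displayed formula, in which the boost parameter is written $A$. Hence, as a representation of $SO(n-1)\times SO_0(1,1)$,
\[
\mathfrak{n}_2\cong \mathbb{R}^{n-1}\otimes\mathbb{R}^{1,1},
\]
the tensor product of the standard representation $V=\mathbb{R}^{n-1}$ of $SO(n-1)$ with the standard $2$-dimensional representation $W=\mathbb{R}^2$ of $SO_0(1,1)$.

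Let $U\subset\mathfrak{n}_2$ be a nonzero invariant subspace. Since $V$ is absolutely irreducible for $n-1\ge 3$ (more care is needed for small $n$), Schur's lemma shows that every $SO(n-1)$-submodule of $V\otimes W$ has the form $V\otimes L$ for some subspace $L\subset W$. Invariance under $SO_0(1,1)$ then forces $L$ to be a boost-invariant subspace of $\mathbb{R}^2$. The infinitesimal generator of the boosts is $\begin{pmatrix}0&-1\\-1&0\end{pmatrix}$, whose eigenlines are $y=z$ and $y=-z$.

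This is the main obstacle. Over $\mathbb{R}$ the boosts are diagonalizable with real eigenvalues $e^{\mp t}$, so the null lines $L_\pm=\{y=\pm z\}$ are genuinely invariant. With the action exactly as displayed, $\mathfrak{n}_2$ would therefore split further as $V\otimes L_+\oplus V\otimes L_-$.

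To salvage the statement, I would first recheck the isotropy computation. The $SO_0(1,1)$ factor of the isotropy acts by $B\mapsto$ (left action by the corresponding boost of $SO_0(1,n)$ in the $e_0,e_1$ plane) combined with right action by $B^{-1}$. I would recompute $\mathrm{Ad}$ on $\mathfrak{n}$ to see whether it really mixes $y$ and $z$ by a boost, or by a rotation. In the rotation case, $W$ has no invariant real lines, and the argument gives irreducibility. Failing that, I would interpret ``irreducible factors'' in the sense of the paper as indecomposable with respect to the $B_2$-orthogonal decomposition. The null lines $L_\pm$ are $B_2$-isotropic, so $V\otimes L_\pm$ are not nondegenerate summands, and $\mathfrak{n}_2$ admits no $B_2$-orthogonal invariant splitting. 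I would state the conclusion carefully in whichever form the recomputation supports.
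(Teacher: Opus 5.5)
Your proposal does not prove the proposition, and no proof is possible. The obstruction you isolate is genuine, the statement is false as written, and the paper gives no argument for it that might get around this. The recomputation you propose rules out your first fallback.

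The stabiliser of $(e_0,e_1)$ is $H_1=\{(\mathrm{diag}(B^{-1},R),B) : R\in SO(n-1),\ B\in SO_0(1,1)\}$. The $B_2$-orthogonal complement of $\mathfrak{h}_1$ is $\mathfrak{n}=\{(X_M,\,x\epsilon)\}$. Here $\epsilon$ spans $\mathfrak{so}_0(1,1)$, and $X_M\in\mathfrak{so}_0(1,n)$ has vanishing diagonal blocks (for the splitting $\mathbb{R}^2\oplus\mathbb{R}^{n-1}$) and lower-left block $M=(y\mid z)$. Because the second factor is abelian, $\mathrm{Ad}(h)$ fixes $x\epsilon$. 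Conjugation by the block-diagonal matrix gives $M\mapsto RMB$.

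For the boost $B_t$ (diagonal entries $\cosh t$, off-diagonal entries $\sinh t$) this is a genuine hyperbolic rotation of the pair $(y,z)$, not a Euclidean one. So $\mathfrak{n}_2^{\pm}=\{(w,\pm w)\}$ are invariant, with $SO_0(1,1)$ acting by the characters $e^{\pm t}$ and $SO(n-1)$ by the standard representation. The decomposition into irreducibles is therefore $\mathfrak{n}=\mathfrak{n}_1\oplus\mathfrak{n}_2^+\oplus\mathfrak{n}_2^-$: the flow line and the two horospherical (stable and unstable) directions.

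This is forced by the geometry. The element $h=(\mathrm{diag}(B_t^{-1},I),B_t)$ acts on $U\mathbb{H}^n$ as the time-$t$ geodesic flow followed by an isometry, and the flow expands and contracts these directions by $e^{\pm t}$. Even reading the paper's $\cosh(A),\sinh(A)$ literally as commuting operators, the subspaces $y=\pm z$ stay invariant.

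Your second fallback changes the statement rather than proving it, and its set-up contains an error. Since $\mathfrak{so}_0(1,1)$ is abelian, $B_0\equiv 0$, so $B_2$ vanishes identically on $\mathfrak{n}_1$. Thus $B_2|_{\mathfrak{n}}$ is degenerate, not nondegenerate. On $\mathfrak{n}_2$ it is a positive multiple of $|y|^2-|z|^2$, of split signature, and $\mathfrak{n}_2^{\pm}$ are totally isotropic.

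Since $\mathfrak{n}_2^+$ and $\mathfrak{n}_2^-$ are non-isomorphic irreducibles, the only invariant subspaces of $\mathfrak{n}_2$ are $0$, $\mathfrak{n}_2^{\pm}$ and $\mathfrak{n}_2$. So it is true that $\mathfrak{n}_2$ has no nondegenerate invariant splitting. But that is orthogonal indecomposability, not irreducibility, and it does not rescue the way the statement is used in Proposition~\ref{prop:G1invariant}.

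Indeed, the eigenvalues $e^{\pm t}$ are unbounded, so $\mathrm{Ad}(H_1)$ preserves no inner product on $\mathfrak{n}$. Hence $U\mathbb{H}^n$ carries no $G_1$-invariant Riemannian metric at all; consistently, the forms $\langle\,,\rangle_{\mathfrak{n}_1}$ and $\langle\,,\rangle_{\mathfrak{n}_2}$ there are zero and indefinite. The correct outcome of your analysis is the three-term decomposition above, together with the conclusion that the proposition as stated is false.
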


\begin{proposition}\label{prop:G1invariant}
The $G_1$-invariant metrics on $U\mathbb{H}^n$ are exactly $g = \lambda_1\langle\,,\rangle_{\mathfrak{n}_1} + \lambda_2\langle\,,\rangle_{\mathfrak{n}_2}$, $\lambda_1,\lambda_2>0$, where $\langle\,,\rangle_{\mathfrak{n}_i}$ is the restriction to $\mathfrak{n}_i$ of the canonical normal form on $\mathfrak{n}$.
\end{proposition}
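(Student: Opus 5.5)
The plan is to combine the Kirillov theorem quoted in Section~\ref{sec:homog} with the decomposition $\mathfrak{n} = \mathfrak{n}_1 \oplus \mathfrak{n}_2$ from the previous proposition, and then use Schur's lemma. First I will check that the hypotheses hold. The group $G_1$ acts on $U\mathbb{H}^n$ with compact isotropy $H_1$: its $SO_0(1,1)$ factor is absorbed because the action is effective modulo the diagonal identification, and the isotropy acts through a compact image $SO(n-1)$. By the quoted theorem, $G_1$-invariant metrics then correspond bijectively to $\mathrm{Ad}(H_1)$-invariant scalar products on $\mathfrak{n}$. The problem is therefore purely algebraic: classify the $H_1$-invariant inner products on $\mathfrak{n}$.

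Let $\langle\,,\rangle$ be such a scalar product, and let $\langle\,,\rangle_0$ be the canonical normal form (the restriction of $-B_2$, or $B_2$ up to sign, on $\mathfrak{n}$). Write $\langle X,Y\rangle = \langle SX,Y\rangle_0$. Here $S$ is $\langle\,,\rangle_0$-symmetric, positive definite and commutes with the isotropy action. Because $\mathfrak{n}_1$ is the trivial module and $\mathfrak{n}_2$ contains no trivial summand, $S$ preserves both $\mathfrak{n}_1$ and $\mathfrak{n}_2$. Since these are mutually orthogonal for $\langle\,,\rangle_0$, there are no cross terms, and on $\mathfrak{n}_1$ the operator $S$ is multiplication by some $\lambda_1>0$.

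The main obstacle is $\mathfrak{n}_2$. For $S|_{\mathfrak{n}_2}$ to be a scalar $\lambda_2$, I need $\mathfrak{n}_2$ to be irreducible of real type, meaning its commutant is $\mathbb{R}$. The previous proposition gives irreducibility, and that is precisely the point where the mixing of $y$ and $z$ by the hyperbolic rotation in the isotropy formula matters. Without it, $\mathbb{R}^{n-1}\oplus\mathbb{R}^{n-1}$ would split into two copies and admit a $3$-parameter family of invariant products, as in the $SO_0(1,n)$ case. I will use the formula $(y,z)\mapsto(\cosh(A)y-\sinh(A)z,\,-\sinh(A)y+\cosh(A)z)$ to show directly that any symmetric intertwiner $S$ must be scalar. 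Write $S=\begin{pmatrix}P&Q\\Q^T&R\end{pmatrix}$ with blocks commuting with $SO(n-1)$. For $n-1\ge3$ each block is a scalar multiple of the identity, and for $n-1=2$ it lies in the span of $I$ and the rotation $J$. Commuting with the boost part then forces $Q=0$ and $P=R$, hence $S=\lambda_2 I$. I expect the low-dimensional cases $n=2,3$ to need a separate remark, because the commutant there can be larger.

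Conversely, for any $\lambda_1,\lambda_2>0$ the form $\lambda_1\langle\,,\rangle_{\mathfrak{n}_1}+\lambda_2\langle\,,\rangle_{\mathfrak{n}_2}$ is positive definite. It is also $H_1$-invariant, since $\langle\,,\rangle_0$ is $\mathrm{Ad}$-invariant and both summands are invariant subspaces. By the bijection this gives a $G_1$-invariant metric, which completes the characterisation.
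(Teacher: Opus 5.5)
Your first step fails, and the failure cannot be repaired: the isotropy group is not compact and does not act through a compact image. Solving $A(e_0,e_1)B=(e_0,e_1)$ gives $H_1=\{(\mathrm{diag}(B^{-1},C),B) : B\in SO_0(1,1),\ C\in SO(n-1)\}\cong SO(n-1)\times\mathbb{R}$. The action $P\mapsto APB$ is effective for $n\ge 2$, so nothing ``absorbs'' the $SO_0(1,1)$ factor. The element $h_s=(\mathrm{diag}(B_s^{-1},I_{n-1}),B_s)$ is the geodesic flow for time $s$ followed by transvection by $-s$ along the base geodesic, and it fixes $(e_0,e_1)$. $\mathrm{Ad}(h_s)$ acts on the $(y,z)$-part of $\mathfrak{n}$ by the hyperbolic rotation you quote yourself, $(y,z)\mapsto(\cosh s\,y+\sinh s\,z,\ \sinh s\,y+\cosh s\,z)$, up to the sign of $s$. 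You cannot both use this boost to mix $y$ and $z$ and claim the isotropy image is $SO(n-1)$. The map multiplies $\{(w,w)\}$ by $e^{s}$ and $\{(w,-w)\}$ by $e^{-s}$; these are the unstable and stable directions of the flow. Hence it preserves no positive definite scalar product. For any $G_1$-invariant metric, the differential at the base point of an isotropy element is an isometry, and this half of the Kirillov correspondence needs no compactness. So $U\mathbb{H}^n$ carries no $G_1$-invariant Riemannian metric at all. The Kirillov theorem cannot be invoked, and the proposition, read for Riemannian metrics, is false. The paper gives no argument beyond the preceding decomposition, which is implicitly your route, so the same error is in the source.

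The later steps also fail on their own. $\mathfrak{n}_2$ is not irreducible, since $\{(w,w)\}$ and $\{(w,-w)\}$ are $H_1$-invariant. Correspondingly, for $n\ge4$ (scalar blocks) the commutant of the boosts is $\{\begin{pmatrix}a&b\\ b&a\end{pmatrix}\}$. Commuting with the boosts therefore forces $P=R$ but not $Q=0$. $Q=0$ only follows from self-adjointness with respect to an \emph{invariant} reference form, and the only one available is the Killing form.

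The converse fails too. Since $\mathfrak{so}_0(1,1)$ is abelian, $B_0\equiv 0$. The $B_2$-orthogonal complement of $\mathfrak{h}_1$ is $\mathfrak{n}_2\oplus(0\oplus\mathfrak{so}_0(1,1))$, with $\mathfrak{n}_1=0\oplus\mathfrak{so}_0(1,1)$. So $\langle\,,\rangle_{\mathfrak{n}_1}\equiv 0$, and your ansatz $\langle X,Y\rangle=\langle SX,Y\rangle_0$ is unavailable there. Moreover, $B_1|_{\mathfrak{n}_2}$ is a nonzero multiple of $|y|^2-|z|^2$: one block lies in the non-compact part $\mathfrak{p}$, where $B_1>0$, and the other in $\mathfrak{so}(n)$, where $B_1<0$. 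It therefore has split signature $(n-1,n-1)$. Thus $\lambda_1\langle\,,\rangle_{\mathfrak{n}_1}+\lambda_2\langle\,,\rangle_{\mathfrak{n}_2}$ is degenerate and indefinite, not positive definite.

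A correct Schur-type computation gives, for $n\ge 4$, that the $\mathrm{Ad}(H_1)$-invariant symmetric bilinear forms on $\mathfrak{n}$ are exactly $\lambda_1x^2+\mu(|y|^2-|z|^2)$. For $n=3$ there is one more term built from the $SO(2)$-invariant area form. So the $G_1$-invariant metrics are pseudo-Riemannian, of signature $(n,n-1)$ when $\lambda_1,\mu>0$, with the stable and unstable subspaces null. Any provable version of the statement has to take this form.
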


\subsection{A metric invariant under the geodesic flow}

\begin{theorem}[Hopf-invariant metric on $U\mathbb{H}^n$]\label{thm:hopf}
There exists a Riemannian metric $g_{\mathrm{Hopf}}$ on $U\mathbb{H}^n$ such that the geodesic flow $\phi_s : U\mathbb{H}^n \to U\mathbb{H}^n$ is an isometry of $(U\mathbb{H}^n, g_{\mathrm{Hopf}})$ for every $s \in \mathbb{R}$.
\end{theorem}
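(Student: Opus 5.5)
We use Hopf coordinates. Let $\partial\mathbb{H}^n \cong S^{n-1}$ be the sphere at infinity and $\Delta$ its diagonal. A unit vector $v=(x,X)\in U\mathbb{H}^n$ determines the geodesic $\gamma_v(t)=\cosh t\,x+\sinh t\,X$, whose endpoints are $v_\pm=\gamma_v(\pm\infty)\in\partial\mathbb{H}^n$. In the hyperboloid model these are the null directions $[x\pm X]$. Fix a base point $o=e_0$ and use the Busemann cocycle $\beta_\xi(y,o)=\lim_{t\to\infty}\big(d(y,\xi_t)-d(o,\xi_t)\big)$. Define
\[
H:U\mathbb{H}^n\to(\partial\mathbb{H}^n\times\partial\mathbb{H}^n\setminus\Delta)\times\mathbb{R},\qquad H(v)=\big(v_-,v_+,\beta_{v_+}(x,o)\big).
\]
In the hyperboloid model, with $\xi$ represented by the null vector $(1,\theta)$, one has $\beta_\xi(y,o)=\log\big(-q(y,(1,\theta))\big)$. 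This gives the explicit form $s(v)=\log\big(-q(x,(1,\theta_+))\big)$, where $x+X$ is proportional to $(1,\theta_+)$. Hence $H$ is smooth.

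The first step is to show that $H$ is a diffeomorphism. Injectivity holds because a geodesic is determined by its endpoints, and the Busemann function $\beta_{v_+}(\cdot,o)$ is strictly monotone along $\gamma_v$. Surjectivity holds because every pair of distinct points of $\partial\mathbb{H}^n$ bounds a geodesic, and $\beta$ takes every real value along it. For the inverse, solve explicitly in the hyperboloid: given null vectors $\ell_\pm$ representing $v_\pm$, normalise them so that $q(\ell_+,\ell_-)=-2$. Then $x=\tfrac12(\lambda\ell_++\lambda^{-1}\ell_-)$ and $X=\tfrac12(\lambda\ell_+-\lambda^{-1}\ell_-)$, where $\lambda>0$ is fixed by the third coordinate. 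Smoothness of the inverse follows from this formula.

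The second step is the conjugation $H\circ\phi_s=\tau_s\circ H$, where $\tau_s(\xi,\eta,t)=(\xi,\eta,t+s)$. The flow $\phi_s$ does not change the endpoints. Since $\beta_\xi(y,o)$ decreases at unit speed along a geodesic converging to $\xi$, we get $\beta_{v_+}(\gamma_v(s),o)=\beta_{v_+}(x,o)-s$. With this sign convention the flow is translation by $-s$; we replace $t$ by $-t$ in the definition of $H$ if we want $+s$. This is the key computation, and in the model it is immediate: $x+X\mapsto e^{s}(x+X)$ under $\phi_s$, so $s(v)$ shifts by exactly $s$ up to sign.

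The third step is to pull back a product metric. Take any Riemannian metric $h$ on the open manifold $\partial\mathbb{H}^n\times\partial\mathbb{H}^n\setminus\Delta$. One natural choice is the restriction of the product round metric on $S^{n-1}\times S^{n-1}$. A more canonical choice, invariant under the diagonal Möbius action, is built from the cross-ratio $|\xi-\eta|^{-2}$ times the round metrics. Set
\[
g_{\mathrm{Hopf}}=H^*\big(h+dt^2\big).
\]
Each $\tau_s$ is an isometry of $h+dt^2$, since it is translation in the $\mathbb{R}$-factor. By the conjugation identity, $\phi_s=H^{-1}\tau_sH$ is therefore an isometry of $g_{\mathrm{Hopf}}$.

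The main obstacle is the first step: checking that $H$ is a global diffeomorphism, and in particular that its differential is invertible. The theorem does not need more than this. I would verify invertibility directly from the explicit inverse formula above. That formula is visibly smooth in $(\ell_+,\ell_-,\lambda)$ on the open set where $\ell_+\neq\ell_-$ projectively.
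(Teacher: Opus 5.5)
Your proposal is correct and follows essentially the same route as the paper: Hopf coordinates $(v_-,v_+,\beta_{v_+}(x,o))$ conjugate $\phi_s$ to a translation of the $\mathbb{R}$-factor, and you pull back $dt^2+h$ (the paper takes $h=\lambda^2(p_1^*g_{\mathrm{rd}}+p_2^*g_{\mathrm{rd}})$); your normalisation of the Busemann cocycle at $o$, the sign bookkeeping, and the explicit hyperboloid inverse supply details the paper only asserts. One inessential aside is wrong: $|\xi-\eta|^{-2}(g_{\mathrm{rd}}\oplus g_{\mathrm{rd}})$ is not invariant under the diagonal M\"obius action, since a loxodromic element fixing $\xi,\eta$ with translation length $\ell$ rescales the two factors at $(\xi,\eta)$ by $e^{2\ell}$ and $e^{-2\ell}$, and in fact no such invariant Riemannian $h$ exists because the stabiliser of a pair of boundary points is non-compact; since any $h$ works, the proof is unaffected.
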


\begin{proof}
\emph{Step 1 (Hopf coordinates).} For $(x,v) \in U\mathbb{H}^n$ let $\gamma_{x,v}$ be the geodesic with $\gamma(0)=x$, $\gamma'(0)=v$; it has endpoints $\xi^{\pm}(x,v) \in \partial\mathbb{H}^n \cong S^{n-1}$. Fixing the Busemann function $b_{\xi^+}$ associated with $\xi^+$, the map
\[
\Psi : U\mathbb{H}^n \to (\partial\mathbb{H}^n\times\partial\mathbb{H}^n\smallsetminus\Delta)\times\mathbb{R}, \qquad \Psi(x,v) = \big(\xi^-(x,v),\xi^+(x,v),b_{\xi^+}(x)\big)
\]
is a diffeomorphism, in which the geodesic flow acts simply by $\phi_s : (\xi^-,\xi^+,t)\mapsto(\xi^-,\xi^+,t+s)$.

\emph{Step 2 (the metric).} Let $g_{\mathrm{rd}}$ be the round metric on $S^{n-1}$. For $\lambda>0$, set $g_\lambda = dt^2 + \lambda^2\big(p_1^*g_{\mathrm{rd}} + p_2^*g_{\mathrm{rd}}\big)$ on $(\partial\mathbb{H}^n\times\partial\mathbb{H}^n\smallsetminus\Delta)\times\mathbb{R}$, and define $g_{\mathrm{Hopf}} := \Psi^*g_\lambda$.

\emph{Step 3 (invariance).} As $g_\lambda$ has no $dt$ cross-terms and does not depend on $t$, translation in $t$ preserves $g_\lambda$, so $\phi_s^*g_\lambda = g_\lambda$ for all $s$, and pulling back by $\Psi$ gives $\phi_s^*g_{\mathrm{Hopf}} = g_{\mathrm{Hopf}}$.
\end{proof}

\begin{remark}
The parameter $\lambda>0$ is arbitrary, and more generally any metric $dt^2 + h(\xi^-,\xi^+)$ with $h$ independent of $t$ has the same invariance property. The metric $g_{\mathrm{Hopf}}$ differs from the Sasaki metric $g_1$ of Section~\ref{sec:sasaki}; it is specifically constructed so that the geodesic flow acts by isometries, which $g_1$ does not satisfy.
\end{remark}

\section{Horospherical foliation of $U\mathbb{H}^n$}

We now relate the Sasaki metric to the foliation of $U\mathbb{H}^n$ by horospherical cylinders, using a Busemann function on the base.

Let $\pi : U\mathbb{H}^n \to \mathbb{H}^n$ carry the Sasaki metric $g^S$, with orthogonal splitting $T(U\mathbb{H}^n) = \mathcal{H}\oplus\mathcal{V}$ into horizontal and vertical parts. Fix a Busemann function $b$ on $\mathbb{H}^n$, let $\xi = \nabla b$ (a unit vector field) and $\xi^b$ its horizontal lift. Consider
\[
D_b = \{ X \in \mathfrak{X}(U\mathbb{H}^n) \mid g^S(X,\xi^b) = 0 \} = (\ker db)^{\mathcal H}\oplus \mathcal V.
\]

\begin{theorem}\label{thm:horosphere}
Suppose there is an affine connection $\nabla^b$ on $U\mathbb{H}^n$ with $\nabla^b\xi^b = 0$, $\nabla^bg^S = 0$, and torsion $T^b$ satisfying $T^b(X,Y)\in D_b$ for all $X,Y\in D_b$. Then $D_b$ is integrable, its leaves are totally geodesic for $\nabla^b$, and they coincide with the horospherical cylinders
\[
H_c := \pi^{-1}(b^{-1}(c)) \cong S_c \times S^{n-1}, \qquad c\in\mathbb{R},
\]
where $S_c$ is a horosphere in $\mathbb{H}^n$.
\end{theorem}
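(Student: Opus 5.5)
The argument has three parts. First, integrability will follow from the hypotheses on $\nabla^b$ alone. Second, total geodesy will come from parallelism of the normal field. Third, the leaves will be identified directly from the description $D_b=(\ker db)^{\mathcal H}\oplus\mathcal V$.

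\emph{Integrability.} Let $X,Y$ be sections of $D_b$, so $g^S(X,\xi^b)=g^S(Y,\xi^b)=0$. Differentiating these identities with $\nabla^bg^S=0$ and $\nabla^b\xi^b=0$ gives
\[
g^S(\nabla^b_YX,\xi^b) = Y\big(g^S(X,\xi^b)\big) - g^S(X,\nabla^b_Y\xi^b)=0,
\]
and similarly $g^S(\nabla^b_XY,\xi^b)=0$. So $\nabla^b_XY$ and $\nabla^b_YX$ lie in $D_b$. Since $[X,Y]=\nabla^b_XY-\nabla^b_YX-T^b(X,Y)$ and $T^b(X,Y)\in D_b$ by hypothesis, $[X,Y]\in D_b$. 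Frobenius then gives integrability.

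\emph{Total geodesy.} The computation above already shows that $\nabla^b_XY\in D_b$ for $X,Y$ tangent to a leaf $L$. Hence the second fundamental form of $L$ relative to $\nabla^b$ vanishes, i.e.\ $\nabla^b$ restricts to a connection on $L$. Now let $\gamma$ be a $\nabla^b$-geodesic with $\gamma'(0)\in D_b$. The function $f(t)=g^S(\gamma'(t),\xi^b)$ satisfies $f'(t)=g^S(\nabla^b_{\gamma'}\gamma',\xi^b)+g^S(\gamma',\nabla^b_{\gamma'}\xi^b)=0$. So $\gamma'$ stays in $D_b$ and $\gamma$ remains in the leaf through $\gamma(0)$; this holds on its whole domain by a connectedness argument.

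\emph{Identification of leaves.} By definition of the Sasaki splitting, $d\pi$ maps $\mathcal H$ isomorphically onto $T\mathbb H^n$ and kills $\mathcal V$. Hence $d(b\circ\pi)(X)=db(d\pi X)$ vanishes exactly when the horizontal part of $X$ lies in $(\ker db)^{\mathcal H}$. This shows $D_b=\ker d(b\circ\pi)$. Since $|\nabla b|=1$, $b\circ\pi$ is a submersion, and its level sets $H_c=\pi^{-1}(b^{-1}(c))$ are closed embedded hypersurfaces tangent to $D_b$. Each $H_c$ is connected, because $b^{-1}(c)=S_c$ is a horosphere (diffeomorphic to $\mathbb R^{n-1}$) and the fibres are spheres. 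So the $H_c$ are precisely the maximal integral manifolds, i.e.\ the leaves. Finally, $U\mathbb H^n$ is trivial over the contractible $S_c$, which gives $H_c\cong S_c\times S^{n-1}$. One could trivialise globally using the frame $\xi$ together with an orthonormal frame of $\ker db$, or simply by the Minkowski model.

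The main subtlety is that this last step makes integrability automatic, independent of the torsion hypothesis. The real content about $\nabla^b$ is therefore the total-geodesy statement, and the delicate point there is completeness and globality. To handle it I would only claim that geodesics starting tangent to $H_c$ stay in $H_c$, which follows from closedness of $H_c$ and the uniqueness argument above.
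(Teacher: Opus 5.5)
Your proposal is correct. For integrability (the torsion identity $[X,Y]=\nabla^b_XY-\nabla^b_YX-T^b(X,Y)$ together with $g^S(\nabla^b_XY,\xi^b)=0$) and for total geodesy (the computation $\frac{d}{dt}g^S(\dot\gamma,\xi^b)=0$ along $\nabla^b$-geodesics), it is exactly the paper's argument.

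Where you go beyond the paper is the identification of the leaves. The paper's proof stops after Frobenius and total geodesy and never shows that the leaves are the cylinders $H_c$. Your identity $g^S(\xi^b,Z)=g(\nabla b,d\pi Z)=d(b\circ\pi)(Z)$ supplies that missing step: it gives $D_b=\ker d(b\circ\pi)$, where $b\circ\pi$ is a submersion whose level sets $\pi^{-1}(S_c)$ are connected.

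Your remark that this makes integrability automatic can be pushed one step further. The $1$-form $\theta=g^S(\xi^b,\cdot)=d(b\circ\pi)$ is closed, and it is $\nabla^b$-parallel because $\nabla^b g^S=0$ and $\nabla^b\xi^b=0$. Hence $0=d\theta(X,Y)=\theta(T^b(X,Y))$ for all $X,Y$, so the torsion hypothesis already follows from the other two.

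The same identity also removes the globality worry in your last paragraph. Since $g^S(\xi^b,\dot\gamma)=\frac{d}{dt}\,b(\pi(\gamma(t)))$, the function $b\circ\pi$ is constant along the whole geodesic. No connectedness or closedness argument is needed.

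Finally, your ordering in the total-geodesy step is cleaner than the paper's. You read off $\nabla^b_XY\in D_b$ directly from the first computation, whereas the paper infers it from the geodesic computation.
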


\begin{proof}
For $X,Y\in D_b$, differentiating $g^S(X,\xi^b)=0$ using $\nabla^bg^S=0$ and $\nabla^b\xi^b=0$ gives $g^S(\nabla^b_YX,\xi^b)=0$, and symmetrically $g^S(\nabla^b_XY,\xi^b)=0$. Hence
\[
g^S([X,Y],\xi^b) = g^S(\nabla^b_XY - \nabla^b_YX - T^b(X,Y),\xi^b) = -g^S(T^b(X,Y),\xi^b) = 0,
\]
since $T^b(X,Y)\in D_b$. By the Frobenius theorem, $D_b$ is integrable with leaves orthogonal to $\xi^b$.

For total geodesicity, let $\gamma$ be a $\nabla^b$-geodesic with $\dot\gamma(0)\in D_b$. Then
\[
\tfrac{d}{dt}g^S(\xi^b,\dot\gamma) = g^S(\nabla^b_{\dot\gamma}\xi^b,\dot\gamma) + g^S(\xi^b,\nabla^b_{\dot\gamma}\dot\gamma) = 0,
\]
so $g^S(\xi^b,\dot\gamma(t))\equiv 0$, i.e. $\dot\gamma(t)\in D_b$ for all $t$. Thus $\nabla^b_XY\in D_b$ for all $X,Y\in D_b$ and the leaves are totally geodesic.
\end{proof}

\begin{remark}[An explicit choice of $\nabla^b$]
On $\mathbb{H}^n$ the Busemann function satisfies $\mathrm{Hess}(b) = g - db\otimes db$, hence $\nabla_Z\xi = Z - \langle Z,\xi\rangle\xi$. The metric connection with torsion
\[
\widetilde\nabla_ZW := \nabla_ZW + (\xi\wedge Z)W,\qquad (\xi\wedge Z)W = \langle Z,W\rangle\xi - \langle\xi,W\rangle Z,
\]
satisfies $\widetilde\nabla\xi=0$ and vanishing torsion on $D:=\ker db$. Lifting to $U\mathbb{H}^n$ by $\nabla^b_XY := \big(\widetilde\nabla_{d\pi(X)}d\pi(Y)\big)^{\mathcal H}$ on $\mathcal H$, by the Levi-Civita connection of the fibre $S^{n-1}$ on $\mathcal V$, and by $\nabla^b_XU := \nabla^{\mathrm{ver}}_XU$, $\nabla^b_UX:=0$ for $X\in\mathcal H$, $U\in\mathcal V$, one checks $\nabla^bg^S=0$, $\nabla^b\xi^b=0$, and $T^b(X,U) = -[X,U]\in\mathcal V\subset D_b$ while $T^b$ vanishes on $(\ker db)^{\mathcal H}\times(\ker db)^{\mathcal H}$ and on $\mathcal V\times\mathcal V$. Hence $T^b(D_b,D_b)\subset D_b$, and Theorem~\ref{thm:horosphere} applies.
\end{remark}

\section{Concluding remarks}

We have described two complementary homogeneous pictures of $U\mathbb{H}^n$ — under the isometry group $SO_0(1,n)$ and under the larger group $G_1 = SO_0(1,n)\times SO_0(1,1)$ realising the geodesic flow — together with the classification of their invariant metrics (Propositions~2--5). Using Hopf coordinates we produced a metric for which the geodesic flow is by isometries (Theorem~\ref{thm:hopf}), and we identified the horospherical cylinders as the totally geodesic leaves of a natural foliation of $U\mathbb{H}^n$ associated to a Busemann function on the base, with respect to an explicit connection with torsion (Theorem~\ref{thm:horosphere}). As noted in the introduction, the holonomy-theoretic analysis of the canonical connections attached to the homogeneous descriptions of Section~\ref{sec:UHn}, in the spirit of~\cite{CGS}, is left for a subsequent paper.
\section*{Declarations}
The author declares that no specific funding was received for this research.

\end{document}